\newcommand\N{\mathbb N}
\newcommand\R{\mathbb R}
\newcommand\C{\mathbb C}
\newtheorem*{theorem}{Th\'eor\`eme}
\begin{document}

\title{Une famille d'applications lin\'eaires li\'ee \`a l'hypoth\`ese de Riemann g\'{e}n\'{e}ralis\'{e}e}
\author{Eric SAIAS}
\maketitle
\begin{abstract}
Nous montrons qu'une mani\`ere d'aborder la piste presque-p\'eriodique pour l'hypoth\`ese de Riemann g\'en\'eralis\'ee consiste  \`a \'etudier une certaine famille d'applications lin\'eaires. 
\end{abstract}

\section{Introduction}
Pour tous r\'eels $1/2 < \alpha < \beta <1$ et $ \gamma > 0 $, notons  $A_{\alpha, \beta, \gamma} = (a_{\alpha, \beta, \gamma} (m,n))_{m,n\geq 1} $ la matrice infinie d\'efinie par 

$$ a_{\alpha, \beta, \gamma} (m,n)= \frac{( \frac{1}{mn})^\alpha - (\frac{1}{mn})^\beta}{\log(mn)}\ .\  \frac{\sin[\gamma \log(m/n)]}{\log(m/n)}$$
avec les conventions 
$ \frac{( \frac{1}{1})^\alpha - (\frac{1}{1})^\beta}{\log 1} = \beta - \alpha$ et $\frac{\sin[\gamma. 0]}{0}= \gamma.$ 
\newline Les matrices $A_{\alpha, \beta, \gamma}$ sont sym\'etriques d\'efinies positives. On peut donc d\'efinir leur d\'ecomposition de Cholesky  $A_{\alpha, \beta, \gamma} = \,^tU_{\alpha, \beta, \gamma}\ U_{\alpha, \beta, \gamma}$ o\`u $U_{\alpha, \beta, \gamma}$ est 
 triangulaire sup\'erieure infinie avec des r\'eels strictement positifs sur la diagonale. Nous montrons ici qu'une variante de la piste presque-p\'eriodique pour l'hypoth\`ese de Riemann g\'{e}n\'{e}ralis\'{e}e passe par une bonne connaissance des applications lin\'eaires dont les  $U_{\alpha, \beta, \gamma}$ sont les matrices dans la base canonique.  

\section{Applications lin\'eaires $u_K$ et matrices $U_K$}
Pr\'ecisons le lien ici entre application lin\'eaire et matrice. De mani\`ere g\'en\'erale d\'esignons par  $K$ un compact d'int\'{e}rieur non vide du demi-plan $\{s\in\C:  Re(s) >0\}$. On d\'efinit alors le produit scalaire de l'espace de Hilbert $L^2(K)$ par

 $$ \langle f,g \rangle_{L^2(K)} \ = \iint\limits_{\sigma + i \tau \in K}{\overline{f(\sigma+i\tau)}\ g(\sigma+i\tau)}\ d\sigma \ d\tau.$$ 
 
Posons $e_n :=\frac1{n^s}$. La famille $(e_n)_{n\geq1}$ est une famille libre de $L^2(K)$. On peut donc consid\'erer la famille orthonormale $(e'_n(K))_{n\geq1}$, obtenue par orthonormalisation de Gram-Schmidt de la famille $(e_n)_{n\geq1}$.
Soit $E$ l'espace vectoriel des suites $x$ de $\C^{\N^{*}}$ dont l'abscisse de convergence de la s\'erie de Dirichlet $f_x(s) :=\sum_{k=1} ^{\infty} \frac{x(k)}{k^s}$ est $\leq 0$. Pour $x$ dans $E$, la s\'erie $\sum_{k=1} ^{\infty} \frac{x(k)}{k^s}$ converge aussi dans $L^2(K)$. De plus $\Vert f_x(s) \Vert_{L^2(K)}^2 = \sum_{n\geqslant1} \vert \langle e'_n(K), f_x(s)\rangle _{L^2(K)}\vert^2$. On peut donc d\'efinir l'application lin\'eaire 
\begin{align*} u_K: E&\longrightarrow l^2(\N^*) \\
x&\longmapsto (\langle e'_n(K), f_x(s)\rangle )_{n\geq1}
\end{align*}

On appelle base canonique la famille de suites $(\delta_n)_{n\geq1}$ o\`u $\delta_n(k)$ vaut 1 si $k=n$ et 0 sinon. Cette famille constitue une base de $F$, le sous-espace vectoriel de $E$ form\'e des suites \`a support fini. On a  $u_K(F) \subset F$; la restriction de $u_K$ \`a $F$ d\'efinit donc un endomorphisme de $F$: notons  $U_K$  sa matrice dans la base canonique. 
  
  Choisissons maintenant $K=\{s\in\C: \alpha \leq Re(s) \leq \beta \ et \mid Im(s) \mid \leq \gamma\}$. On v\'erifie alors que $\langle e_m, e_n\rangle _{L^2(K)} = 2a_{\alpha, \beta, \gamma} (m,n)$ et que
  $U_K = \sqrt{2}\ U_{\alpha, \beta, \gamma}$.

\section{D'autres notations}
Soit $\chi$ un caract\`ere de Dirichlet. Rappelons que l'on note usuellement $L_{\chi}(s)$ le prolongement m\'eromorphe de $\sum_{k=1} ^{\infty} \frac{\chi (k)}{k^s}$; on note \'egalement $\zeta(s) = L_1(s)$. Par ailleurs,  pour tout r\'eel t, nous notons ici $x_{\chi}(t)$ la suite d\'efinie par $$x_{\chi}(t)(k) = \begin{cases} (-1)^k k^{it}  &si \  \chi  \ est \  principal\\
  \chi(k) \ k^{it} \    
    & si \ \chi \ est \ non \ principal.
  \end{cases}$$
  Posons $S = \{s\in\C: 1/2 < Re(s) < 1\}$ et d\'esignons par $\lambda$ la mesure de Lebesgue sur $\R$.
  Notons enfin 
  \newline $K(\alpha, \beta, \gamma) = \{s\in\C: \alpha \leq Re(s) \leq \beta \ et \mid Im(s) \mid \leq \gamma\}$.
  
  \section{Un crit\`ere de r\'ecurrence dans $l^2$ pour l'hypoth\`ese de Riemann g\'{e}n\'{e}ralis\'{e}e}

\begin{theorem}
Soit $\chi$ un caract\`ere de  Dirichlet. Les assertions suivantes sont \'equivalentes.
\begin{equation} \label{HR1}
 \ L_\chi(s) \  ne\  s'annule\  pas\  dans\  le\  demi-plan \ \{s\in\C:  Re(s) >1/2\}
\end{equation}
 \begin{equation} \label{HR2} \begin{cases}\ &Pour \ tous\  reels \ \alpha, \beta , \gamma \ et \ \varepsilon \ avec \ 1/2 <\alpha < \beta <1, \gamma > 0 \ et  \  \varepsilon >0,  \ on \ a \\
 &\liminf_{T\rightarrow +\infty} \frac{1}{2T}\lambda\{t\in[-T,T]:\Vert u_{K(\alpha, \beta, \gamma)}(x_\chi(t) - x_\chi(0)) \Vert_2 < \varepsilon \} > 0
 \end{cases}
 \end{equation}
\end{theorem}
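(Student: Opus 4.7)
\medskip

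\noindent\textbf{Plan of proof.} The first step is to rephrase condition \eqref{HR2} analytically. The Parseval-type identity $\|u_K(x)\|_2 = \|f_x\|_{L^2(K)}$ noted in Section~2, applied to $x = x_\chi(t) - x_\chi(0)$, yields $\|u_K(x)\|_2 = \|g_\chi(\cdot-it) - g_\chi\|_{L^2(K)}$, where
$$g_\chi(s) := \begin{cases} L_\chi(s) & \text{if } \chi \text{ non-principal},\\ -(1-2^{1-s})\zeta(s) & \text{if } \chi \text{ principal},\end{cases}$$
the extra factor arising from $\sum_k(-1)^k/k^s = -(1-2^{1-s})\zeta(s)$. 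Since $1-2^{1-s}$ does not vanish on $\{1/2<Re(s)<1\}$, the zero sets of $L_\chi$ and $g_\chi$ coincide in the open critical strip; combined with the classical non-vanishing of $L_\chi$ on the line $Re(s)=1$, assertion \eqref{HR1} is equivalent to $g_\chi$ having no zero in $\{1/2<Re(s)<1\}$.

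\noindent\textbf{Direction \eqref{HR2} $\Rightarrow$ \eqref{HR1}.} I argue by contrapositive. Assuming $g_\chi(\rho)=0$ at some $\rho=\sigma_0+it_0$ with $1/2<\sigma_0<1$, I fix $r>0$ small enough that $\overline{D}(\rho,r)\subset\{1/2<Re(s)<1\}$ contains no other zero of $g_\chi$, then pick $(\alpha,\beta,\gamma)$ so that $\overline{D}(\rho,r)\subset K(\alpha,\beta,\gamma)$. The submean-value inequality for holomorphic functions yields a constant $C_r>0$ with $|g_\chi(z-it)-g_\chi(z)|\le C_r\,\|g_\chi(\cdot-it)-g_\chi\|_{L^2(K(\alpha,\beta,\gamma))}$ throughout $\overline{D}(\rho,r/2)$. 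Setting $\delta:=\min_{|z-\rho|=r/2}|g_\chi(z)|>0$, Rouch\'e's theorem forces, for $\varepsilon<\delta/C_r$, every such $t$ to correspond to a zero of $L_\chi$ in $\overline{D}(\rho,r/2)-it$. A single zero is captured this way by a $t$-interval of length $\leq r$, so positive lower density on $[-T,T]$ would produce $\Omega(T)$ zeros of $L_\chi$ in the rectangle $[\sigma_0-r/2,\sigma_0+r/2]\times[-T-O(1),T+O(1)]$, in contradiction with the classical Bohr--Landau density bound $N_{L_\chi}(\sigma,T)=o(T)$ valid for every $\sigma>1/2$.

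\noindent\textbf{Direction \eqref{HR1} $\Rightarrow$ \eqref{HR2}: the main obstacle.} This direction would rest on the Bohr--Jessen--Bagchi probabilistic theory of vertical shifts. I would introduce the image probability measures
$$\mu_T(A) := \tfrac{1}{2T}\lambda\bigl\{t\in[-T,T]: g_\chi(\cdot-it)\in A\bigr\}$$
on $L^2(K(\alpha,\beta,\gamma))$. Kronecker's theorem (the family $(\log p)_p$ is $\Q$-linearly independent) shows that the shifts $(p^{-it})_p$ equidistribute on the infinite torus $\mathbb{T}^{\mathbb{P}}$; since $Re(s)>1/2$ throughout $K(\alpha,\beta,\gamma)$, the random Euler product $\prod_p\bigl(1-\chi(p)e^{-2i\pi\theta_p}/p^s\bigr)^{-1}$ converges in $L^2(K(\alpha,\beta,\gamma))$, giving weak convergence $\mu_T\Rightarrow\mu_\infty$, where $\mu_\infty$ is the image of the Haar measure on $\mathbb{T}^{\mathbb{P}}$ under this Euler product. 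One would then invoke \eqref{HR1} through an $L^2$-version of Bagchi's approximation theorem to conclude that $g_\chi$ itself lies in the support of $\mu_\infty$. The portmanteau theorem would then supply
$$\liminf_{T\to+\infty}\mu_T\bigl(B(g_\chi,\varepsilon)\bigr)\geq \mu_\infty\bigl(B(g_\chi,\varepsilon)\bigr)>0,$$
which is precisely \eqref{HR2}.

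The principal difficulty is the $L^2$-adaptation of Bagchi's approximation theorem: the original statement concerns the uniform norm on compact subsets of the open strip with connected complement, and must be transferred here to the $L^2$-norm on a rectangle $K(\alpha,\beta,\gamma)$. The rectangle being compactly contained in the open strip, the passage between the two norms should be routine via Cauchy estimates on a slightly enlarged rectangle; the substantive step is the Bagchi argument itself, which uses \eqref{HR1} through the analyticity of $1/L_\chi$ on $\{Re(s)>1/2\}$ to approximate $g_\chi$ in $L^2(K(\alpha,\beta,\gamma))$ by realizations of the random Euler product.
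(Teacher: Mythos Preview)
Your proposal is essentially sound and follows the same Bagchi-recurrence philosophy as the paper, but the organization differs in one significant way.

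The paper does \emph{not} attempt to rerun Bagchi's machinery in the $L^2(K)$ setting. Instead it inserts an intermediate assertion~(3): strong recurrence of the translate $t\mapsto L_\chi^*(s-it)$ in $\mathrm{Hol}(S)$ equipped with the compact-open topology. The equivalence (1)$\Leftrightarrow$(3) is then a direct citation of Bagchi's theorems (with $\limsup$ replaced by $\liminf$, which the paper notes costs nothing). The passage (3)$\Leftrightarrow$(2) is handled by a single clean observation: the family of seminorms $\|\cdot\|_{L^2(K(\alpha,\beta,\gamma))}$ defines exactly the compact-open topology on $\mathrm{Hol}(S)$. One direction is trivial, and the other is the submean-value inequality you already invoked locally for Rouch\'e (the paper cites it as lemma~4.8.6 of Berenstein--Gay). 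Once this is said, there is no ``$L^2$-adaptation of Bagchi's approximation theorem'' to perform at all.

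What you do differently: in the direction (2)$\Rightarrow$(1) you unpack Bagchi's Rouch\'e argument and close with the Bohr--Landau density bound $N_{L_\chi}(\sigma,T)=o(T)$; this is correct and is in substance what the cited theorem~3.7 contains. In the direction (1)$\Rightarrow$(2) you propose to rebuild the limit-measure argument (equidistribution on $\mathbb{T}^{\mathbb{P}}$, a.s.\ convergence of the random Euler product, support identification, portmanteau) directly in $L^2(K)$. That programme would succeed, but your own ``principal difficulty'' dissolves once you notice that your Cauchy-estimate remark is not a local tool but a global statement about topologies: it lets you quote Bagchi's $\mathrm{Hol}(S)$ result verbatim rather than reprove it. The paper's route is therefore shorter; yours is more self-contained.
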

\begin{proof}
Notons
$$L_{\chi}^{*}(s):= \begin{cases} \sum_{k=1} ^{\infty}  \frac{(-1)^{k}}{k^s} = (2^{1-s} -1)\zeta(s) & si \ \chi \ est \ principal\\
 L_{\chi}(s) & si \ \chi \ est \ non \ principal
\end{cases}$$
On munit l'ensemble $HolS$ des fonctions holomorphes sur $S$ de la topologie de la convergence uniforme sur les compacts de $S$.

 Montrons d'abord que (\ref{HR1}) est \'equivalente \`a 
\begin{equation} \label{HR3} \begin{cases}
&pour \ tout \ voisinage \ V \ de \  L_{\chi}^{*}(s), on \ a \\
&\liminf_{T\rightarrow +\infty} \frac{1}{2T}\lambda\{t\in[-T,T]: L_{\chi}^{*}(s-it)\in V\} > 0
\end{cases}
\end{equation}
Dans le travail de Bagchi \cite{B2}, on peut remplacer les limites sup\'erieures par les limites inf\'erieures sans changer les preuves. En effectuant cette substitution dans le \emph{theorem 3.7}, on obtient directement l'\'equivalence de (\ref{HR1}) et (\ref{HR3}) dans le cas o\`u $\chi$ est non principal. 
  Pour $\chi$ principal, il faut travailler un poil plus. Toutes les assertions (\ref{HR1}) correspondant \`a un caract\`ere principal   sont \'equivalentes entre elles, et en particulier \`a celle correspondant au caract\`ere de Dirichlet modulo 2. En rempla\c{c}ant limite sup\'erieure par  limite inf\'erieure dans le \emph{theorem 4.11} de \cite{B2}, on obtient donc que (\ref{HR1}) entra\^ine (\ref{HR3}). R\'eciproquement supposons (\ref{HR3}). En reprenant alors le raisonnement du \emph{theorem 3.7} de \cite{B1} qui utilise le th\'eor\`eme de Rouch\'e, on obtient l'hypoth\`ese de Riemann pour $\zeta(s)$, et donc aussi (\ref{HR1}).
  
  Soient $K$ et $K'$ deux compacts de $S$ tels que $K' \subset \mathring{K}$. Il existe alors (cf. \emph{lemma 4.8.6} de \cite{B3}) une constante $c>0$ telle que pour toute fonction holomorphe $f(s)$ sur $S$, on a $max_{s\in K'} \vert f(s) \vert \leq c \Vert f \Vert_{L^2(K)}$. Cela permet de v\'erifier que la topologie sur $Hol(S)$ d\'efinie par la famille de normes $\Vert f \Vert_{L^2(K(\alpha, \beta, \gamma))}$ o\`u $1/2 < \alpha < \beta <1$ et $ \gamma > 0 $, co\"incide avec la topologie de la convergence uniforme sur les compacts de $S$. On peut donc r\'ecrire (\ref{HR3}) sous la forme
   \begin{equation*} \begin{cases}\ &Pour \ tous\  reels \ \alpha, \beta , \gamma \ et \ \varepsilon \ avec \ 1/2 <\alpha < \beta <1, \gamma > 0 \ et  \  \varepsilon >0,  \ on \ a \\
 &\liminf_{T\rightarrow +\infty} \frac{1}{2T}\lambda\{t\in[-T,T]:  \Vert L_{\chi}^{*}(s-it) -  L_{\chi}^{*}(s) \Vert_{L^2(K(\alpha, \beta, \gamma))} < \varepsilon \} > 0
 \end{cases}
 \end{equation*}
 Or en notant $K = K(\alpha, \beta, \gamma)$, on a 
  \begin{align*} &\Vert L_{\chi}^{*}(s-it) -  L_{\chi}^{*}(s) \Vert_{L^2(K)} = \Vert \sum_{k=1} ^{\infty}  \frac{(x_\chi(t) - x_\chi(0))(k)}{k^s} \Vert_{L^2(K)} \\ 
 =\  &\Vert \sum_{n\geqslant 1} (u_K(x_\chi(t) - x_\chi(0)))_{n}\  e'_{n}(K) \Vert_{L^2(K)} = \Vert u_K(x_\chi(t) - x_\chi(0)) \Vert_2\ .
\end{align*}
 Cela conclut la preuve du th\'eor\`eme. 
 
\end{proof}

Je tiens ici \`a remercier Pierre Mazet, Damien Simon et Andreas Weingartner, d'une part pour les discussions que nous avons eues sur ce sujet, et d'autre part pour leur \LaTeX aide!

\end{document}